\DeclareMathAlphabet{\mathpzc}{OT1}{pzc}{m}{it}
\newtheorem{thm}{Theorem}[section]
\newtheorem{prop}[thm]{Proposition}
\theoremstyle{definition}
\newtheorem{rem}[thm]{Remark}
\newtheorem{example}[thm]{Example}
\numberwithin{equation}{section}
\begin{document}

\title{A closed formula of Littlewood-Richardson coefficients}

\author{Xueqing Wen}

\address{{Address of author: Yau Mathematical Sciences Center, Beijing, 100084, China.}}
	\email{\href{mailto:email address}{{xueqingwen@mail.tsinghua.edu.cn}}}

\begin{abstract}
		
	We relate the space of $\operatorname{GL}_r$-invariant subspace of tensor product of $\operatorname{GL}_r$ representations with the space of parabolic theta functions. And then give a closed formula of Littlewood-Richardson coefficients using Verlinde formula.
		
\end{abstract}

\maketitle

\section{Introduction}

We work over the complex number field $\mathbb{C}$. Consider the general linear group $\operatorname{GL}_r$, then all finite dimensional irreducible representations of $\operatorname{GL}_r$ can be classified by partitions $$\mathcal{P}_r=\{\underline{\lambda}=(\lambda_1\geq \lambda_2\geq  \cdots \geq \lambda_r)|\lambda_i\in \mathbb{Z}\}$$ For a given partition $\underline{\lambda}$, we use $|\underline{\lambda}|=\sum_{i=1}^r\lambda_i$ to denote the size of $\underline{\lambda}$ and use $V(\underline{\lambda})$ to denote the irreducible representation of $\operatorname{GL}_r$ with highest weight $\underline{\lambda}$(upto isomorphism). Given two representations $V(\underline{\lambda})$ and $ V(\underline{\mu})$, by the completely reducibility of $\operatorname{GL}_r$, we have the following decomposition $$V(\underline{\lambda})\otimes V(\underline{\mu})=\sum_{\underline{\nu}\in \mathcal{P}_r}c_{\underline{\lambda}\underline{\mu}}^{\underline{\nu}}V(\underline{\nu})$$where $c_{\underline{\lambda}\underline{\mu}}^{\underline{\nu}}\geq 0$ are called the Littlewood-Richardson(\textbf{LR} for short) coefficients and $c_{\underline{\lambda}\underline{\mu}}^{\underline{\nu}}= 0$ unless $|\lambda|+|\mu|=|\nu|$.

\textbf{LR} coefficients occur not only in the decomposition of $\operatorname{GL}_r$ representations, but also in combinatorics of Young diagram and tableaux, or in the intersection of Schubert varieties inside Grassmannian, or in the sum of Hermitian matrices, or in the extensions of finite abelian groups. Please refer \cite{HRLST12} for a brief introduction.

One can compute the \textbf{LR} coefficients using the \textbf{LR} rule, which is a complicated combinatorics rule counting certain skew tableaux. As far as I can see, we do not have a closed formula to calculate \textbf{LR} coefficients for now. In particular, in \cite{NH06},  H. Narayanan proved that the computation of \textbf{LR} coefficients is an $\#P$ problem, which means that unless $P=NP$, we would not have an efficient algorithm to compute these numbers.

In this paper we post a closed formula of \textbf{LR} coefficients, which is of course a complicated one by H. Narayanan's result. We relate the \textbf{LR} coefficients to the dimension of space of parabolic theta functions on the projective line, which is known by the so called Verlinde formula.

To be precise, consider three partitions
\begin{align*}
&\underline{\lambda}=(\overbrace{\lambda_1= \cdots= \lambda_1}^{n_1(\underline{\lambda})}> \overbrace{\lambda_2= \cdots =\lambda_2}^{n_2(\underline{\lambda})}> \cdots> \overbrace{\lambda_{\sigma_{\lambda}}= \cdots= \lambda_{\sigma_{\lambda}}}^{n_{\sigma_{\lambda}}(\underline{\lambda})}) \\
&\underline{\mu}=(\overbrace{\mu_1= \cdots= \mu_1}^{n_1(\underline{\mu})}> \overbrace{\mu_2= \cdots =\mu_2}^{n_2(\underline{\mu})}> \cdots > \overbrace{\mu_{\sigma_{\mu}}= \cdots= \mu_{\sigma_{\mu}}}^{n_{\sigma_{\mu}}(\underline{\mu})})\\
&\underline{\nu}=(\overbrace{\nu_1= \cdots= \nu_1}^{n_1(\underline{\nu})}> \overbrace{\nu_2= \cdots =\nu_2}^{n_2(\underline{\nu})}> \cdots > \overbrace{\nu_{\sigma_{\nu}}= \cdots= \nu_{\sigma_{\nu}}}^{n_{\sigma_{\nu}}(\underline{\nu})})
\end{align*}
with $|\lambda|+|\mu|=|\nu|$, then the \textbf{LR} coefficient $c_{\underline{\lambda}\underline{\mu}}^{\underline{\nu}}\geq 0$ is equal to the dimension of the following space $$I_{\underline{\lambda}\underline{\mu}}^{\underline{\nu}}:=\big(V(\underline{\lambda})\otimes V(\underline{\mu})\otimes V(\underline{\nu}^*  ) \big)^{\operatorname{GL}_r}=\big(V(\underline{\lambda})\otimes V(\underline{\mu})\otimes V(\underline{\nu}^*  ) \big)^{\operatorname{SL}_r}$$ where $\underline{\nu}^*=(-\nu_r \geq \cdots \geq -\nu_1)$ is the dual partition of $\underline{\nu}$ thus $V(\underline{\nu}^*)\cong V(\underline{\nu})^*$ as $\operatorname{GL}_r$ representations and the second equality follows from that $\mathbb{G}_m\subset \operatorname{GL}_r$ acts trivially on the space.

By Borel-Weil's theorem, one can construct three partial flag varieties equipped with line bundles $\big(\mathbf{F}(\underline{\lambda}), \mathcal{L}(\underline{\lambda})\big)$, $\big(\mathbf{F}(\underline{\mu}), \mathcal{L}(\underline{\mu})\big)$ and $\big(\mathbf{F}(\underline{\nu}^*), \mathcal{L}(\underline{\nu}^*)\big)$ so that the space of global section of these line bundles are isomorphic to $V(\underline{\lambda})$, $V(\underline{\mu})$ and $V(\underline{\nu}^*)$ as $\operatorname{GL}_r$ representations respectively. Thus if we consider $$\mathbf{F}:=\mathbf{F}(\underline{\lambda})\times \mathbf{F}(\underline{\mu}) \times \mathbf{F}(\underline{\nu}^*)$$ and $$\mathcal{L}:= \mathcal{L}(\underline{\mu})\boxtimes \mathcal{L}(\underline{\mu})\boxtimes \mathcal{L}(\underline{\nu}^*)$$ then $\operatorname{SL}_r$ acts diagonally on $\mathbf{F}$ and $I_{\underline{\lambda}\underline{\mu}}^{\underline{\nu}}\cong \text{H}^0(\mathbf{F}, \mathcal{L})^{\operatorname{SL}_r}$.

By a result in \cite{Wen21}, under certain choice of polarization, the geometric invariant theory(GIT) quotient of $\mathbf{F}$ by $\operatorname{SL}_r$ is isomorphic to the moduli space of semistable parabolic bundles over $\mathbb{P}^1$ and the line bundle $\mathcal{L}$ descents to the theta line bundle on the moduli space. Hence $I_{\underline{\lambda}\underline{\mu}}^{\underline{\nu}}$ can be identified as the space of parabolic theta functions, whose dimension is counted by the Verlinde formula in \cite{SZh20}. 

In order to post our closed formula, we need more notations. For the three given partitions above, we fix an integer $k$ so that $$\dfrac{\lambda_1-\lambda_{\sigma_{\lambda}}+\mu_1-\mu_{\sigma_{\mu}}+\nu_1-\nu_{\sigma_{\nu}}}{k}<\dfrac{1}{r}$$ and we define partitions 
\begin{align*}
&{}^{k}\underline{\lambda}=(\overbrace{k-\lambda_1+\lambda_1= \cdots= k-\lambda_1+\lambda_1}^{n_1(\underline{\lambda})}> \cdots> \overbrace{k-\lambda_1+\lambda_{\sigma_{\lambda}}= \cdots=k-\lambda_1+ \lambda_{\sigma_{\lambda}}}^{n_{\sigma_{\lambda}}(\underline{\lambda})}) \\
&{}^{k}\underline{\mu}=(\overbrace{k-\mu_1+\mu_1= \cdots= k-\mu_1+\mu_1}^{n_1(\underline{\mu})}> \cdots > \overbrace{k-\mu_1+\mu_{\sigma_{\mu}}= \cdots=k-\mu_1+ \mu_{\sigma_{\mu}}}^{n_{\sigma_{\mu}}(\underline{\mu})})\\
&{}^{k}\underline{\nu}^*=(\overbrace{k+\nu_{\sigma_{\nu}}-\nu_{\sigma_{\nu}}= \cdots= k+\nu_{\sigma_{\nu}}-\nu_{\sigma_{\nu}}}^{n_{\sigma_{\nu}}(\underline{\nu})}> \cdots > \overbrace{k+\nu_{\sigma_{\nu}}-\nu_1= \cdots= k+\nu_{\sigma_{\nu}}-\nu_1}^{n_{1}(\underline{\nu})})
\end{align*}


\begin{thm}\label{1.1}

For the given partitions $\underline{\lambda}$, $\underline{\mu}$, $\underline{\nu}$ and $k$ we chosen above, the \emph{\textbf{LR}} coefficient can be calculated by $$c_{\underline{\lambda}\underline{\mu}}^{\underline{\nu}}=\dfrac{1}{r(r+k)^{r-1}}\sum_{\overrightarrow{v}}\emph{exp}\big(2\pi i(-\dfrac{|\Sigma|}{r(r+k)})\sum_{i=1}^rv_i\big)\big(\prod_{i<j}(2\emph{sin}\pi\dfrac{v_i-v_j}{r+k})^{-2}\big)S_{\Sigma}(\emph{exp}2\pi i\dfrac{\overrightarrow{v}}{r+k})$$ where $|\Sigma|=|{}^k\underline{\lambda}|+|{}^k\underline{\mu}|+|{}^k\underline{\nu}^*|$, $S_{\Sigma}=S_{{}^k\underline{\lambda}}S_{{}^k\underline{\mu}}S_{{}^k\underline{\nu}^*}$ is the product of Schur polynomials and the summation index $\overrightarrow{v}=(v_1, \cdots, v_r)$ runs through the integers $0=v_r< \cdots < v_2< v_1 < r+k$.

\end{thm}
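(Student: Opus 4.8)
The plan is to follow the chain of identifications already sketched in the introduction and then make the Verlinde count explicit. Since the central $\mathbb{G}_m\subset\operatorname{GL}_r$ acts by $t\mapsto t^{|\underline\lambda|+|\underline\mu|-|\underline\nu|}=t^{0}$ on $V(\underline\lambda)\otimes V(\underline\mu)\otimes V(\underline\nu^*)$, we have $c_{\underline\lambda\underline\mu}^{\underline\nu}=\dim I_{\underline\lambda\underline\mu}^{\underline\nu}$, and Borel--Weil gives an $\operatorname{SL}_r$-equivariant isomorphism $I_{\underline\lambda\underline\mu}^{\underline\nu}\cong\mathrm H^0(\mathbf F,\mathcal L)^{\operatorname{SL}_r}$ on the triple product of partial flag varieties. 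Replacing $\underline\lambda,\underline\mu,\underline\nu^*$ by the $k$-normalized partitions ${}^k\underline\lambda,{}^k\underline\mu,{}^k\underline\nu^*$ twists each factor by a one-dimensional $\operatorname{GL}_1$-character and so leaves the space of $\operatorname{SL}_r$-invariants unchanged; the purpose of the normalization is that, by the hypothesis $\tfrac1k(\lambda_1-\lambda_{\sigma_\lambda}+\mu_1-\mu_{\sigma_\mu}+\nu_1-\nu_{\sigma_\nu})<\tfrac1r$, all parts then lie in $[0,k]$, so that the associated parabolic weights lie in the distinguished alcove and define a genuine level-$k$ parabolic structure. Feeding this into the main result of \cite{Wen21} identifies $\mathbf F/\!\!/\operatorname{SL}_r$ with the moduli space $\mathcal M$ of semistable rank-$r$ degree-zero parabolic bundles on $\mathbb P^1$ with three marked points, whose quasi-parabolic types are those of $\mathbf F(\underline\lambda),\mathbf F(\underline\mu),\mathbf F(\underline\nu^*)$ and whose weights come from ${}^k\underline\lambda,{}^k\underline\mu,{}^k\underline\nu^*$, under which $\mathcal L$ descends to (a power of) the theta bundle $\Theta$; here $|\underline\lambda|+|\underline\mu|=|\underline\nu|$ is precisely the integrality condition making $\mathcal M$ and $\Theta$ well defined, and the smallness hypothesis places us in the GIT chamber to which \cite{Wen21} applies (with no strictly semistable objects). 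Standard GIT descent then yields $c_{\underline\lambda\underline\mu}^{\underline\nu}=\dim\mathrm H^0(\mathbf F,\mathcal L)^{\operatorname{SL}_r}=\dim\mathrm H^0(\mathcal M,\Theta)$, so it remains only to evaluate the latter.

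For this I would invoke the Verlinde formula of \cite{SZh20} for this genus-zero, three-marked-point moduli space at level $k$ (together with the vanishing of higher cohomology of $\Theta$, so that $\dim\mathrm H^0$ equals the Verlinde number), which expresses $\dim\mathrm H^0(\mathcal M,\Theta)$ as a sum over the level-$k$ dominant weights $\sigma$ of $\widehat{\mathfrak{sl}}_r$ of a ratio of entries of the modular $S$-matrix, i.e. as a fusion coefficient $\sum_{\sigma}S_{{}^k\underline\lambda\,\sigma}S_{{}^k\underline\mu\,\sigma}S_{{}^k\underline\nu^*\sigma}/S_{0\sigma}$. I would then substitute the classical closed form of the affine $S$-matrix for $\widehat{\mathfrak{sl}}_r$: each entry $S_{\underline\eta\,\sigma}$ is, up to the common normalization $\big(r(r+k)^{r-1}\big)^{-1/2}$, a determinant of $(r+k)$-th roots of unity, equivalently the product of the Schur polynomial $s_{\underline\eta}$ evaluated at the points $\exp\!\big(2\pi i(\sigma+\rho)/(r+k)\big)$ with the Weyl denominator $\prod_{i<j}2\sin\pi\tfrac{(\sigma+\rho)_i-(\sigma+\rho)_j}{r+k}$, so that in particular $S_{0\sigma}$ is that normalization times the Weyl denominator. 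Reindexing the sum over $\sigma$ by the vectors $\overrightarrow v=(v_1,\dots,v_r)$ with $0=v_r<\cdots<v_1<r+k$ — which are exactly the $\rho$-shifts $\sigma+\rho$ in the standard coordinates — converts $\sum_\sigma$ into $\sum_{\overrightarrow v}$.

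The remaining step is then a direct simplification of the resulting expression. Collecting the Weyl-denominator contributions from the three numerator factors and the one denominator factor produces the sine factor $\big(\prod_{i<j}2\sin\pi\tfrac{v_i-v_j}{r+k}\big)^{-2}$ together with the overall constant $\tfrac1{r(r+k)^{r-1}}$ (the extra $\tfrac1r$ being the index $[P:Q]=r$ in $|P/(r+k)Q|$ that enters $S_{0\sigma}$); the three character factors combine into $S_\Sigma\!\big(\exp 2\pi i\,\overrightarrow v/(r+k)\big)$, the product of the Schur polynomials of the $k$-normalized partitions; and rewriting these characters in terms of ${}^k\underline\lambda,{}^k\underline\mu,{}^k\underline\nu^*$ rather than $\underline\lambda,\underline\mu,\underline\nu^*$ — equivalently, extracting the homogeneity of degree $|\Sigma|=|{}^k\underline\lambda|+|{}^k\underline\mu|+|{}^k\underline\nu^*|$ — produces exactly the prefactor $\exp\!\big(2\pi i(-\tfrac{|\Sigma|}{r(r+k)})\sum_iv_i\big)$, which is moreover what renders the summand well defined as a function of the level-$k$ weight $\sigma$ (it absorbs the $\mathbb Z/r$ ambiguity). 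Assembling these contributions gives the asserted formula.

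I expect the conceptual content to be carried entirely by the cited inputs — Borel--Weil, the GIT description of \cite{Wen21}, the Verlinde formula of \cite{SZh20}, and the standard closed form of the affine $S$-matrix — and the only genuine difficulty to be bookkeeping. The delicate points are: matching the parabolic weights, flag types, linearization and theta bundle across \cite{Wen21} and \cite{SZh20} under the normalization $k$ (and checking the hypotheses of both results hold in the chamber determined by the smallness condition), and then carrying out the $S$-matrix expansion while pinning down exactly the normalization constant, the precise power of the sine factors, the $\rho$-shifts, and the central $\operatorname{GL}_1$-twist that yields the exponential prefactor.
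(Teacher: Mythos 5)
Your proposal follows essentially the same route as the paper: Borel--Weil identifies $c_{\underline{\lambda}\underline{\mu}}^{\underline{\nu}}$ with $\dim\mathrm{H}^0(\mathbf{F},\mathcal{L})^{\operatorname{SL}_r}$, the result of \cite{Wen21} identifies $\mathbf{F}/\!\!/\operatorname{SL}_r$ with the moduli space of semistable parabolic bundles on $\mathbb{P}^1$ with the theta bundle as the descended polarization, and the Verlinde formula of \cite{SZh20} evaluates $\dim\mathrm{H}^0(\mathbf{M}_P,\Theta)$. Two remarks. First, the one place where your justification is thinner than it should be is the step ``standard GIT descent then yields $\dim\mathrm{H}^0(\mathbf{F},\mathcal{L})^{\operatorname{SL}_r}=\dim\mathrm{H}^0(\mathcal{M},\Theta)$'': descent (via \cite{NR93}, \cite{P96}) only gives $\mathrm{H}^0(\mathbf{M}_P,\Theta)\cong\mathrm{H}^0(\mathbf{F}^{ss},\Theta_{\mathbf{F}})^{\operatorname{SL}_r}$, and the identification of invariant sections over the semistable locus with those over all of $\mathbf{F}$ is not automatic for a fixed (first) power of the linearization --- the paper needs Teleman's quantization theorem (\cite{Tel00}, Proposition 2.11) for exactly this point, and your parenthetical claim that there are no strictly semistable objects is neither needed nor true in general here. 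Second, you go further than the paper in the final step: the paper quotes the closed form of the Verlinde formula from \cite{SZh20} as a black box (Proposition \ref{Verlinde formula}), whereas you re-derive it from the affine $S$-matrix; that extra work is not wrong, but it is not required, and it is where most of your bookkeeping risk (normalizations, $\rho$-shifts, the exponential prefactor) is concentrated. With the quantization input supplied, your argument matches the paper's.
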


A little words should added here about the integer $k$. Firstly, the condition we put on $k$ above(see also condition \ref{Condition}) make sure the computation of the fomula in theorem \ref{1.1} is an $\#P$ process, which is predicted by H.Narayanan's result. Secondly, in \cite{BGM15} Belkale, Gibney and Mukhopadhyay defined the critical level and theta level, here our condition on $k$ is related with the critical level there, see Section 4.1 of \cite{BGM15} for details.

\noindent\textbf{Acknowledgements} I would like to thank Professor Xiaotao Sun for seversal valuable advices and thank Swarnava Mukhopadhyay, Mingshuo Zhou, Bin Wang and Xiaoyu Su for helpful discussions. Mukhopadhyay pointed out the reference \cite{BGM15} for me.

\section{Moduli space of parabolic bundles over $\mathbb{P}^1$ and main theorem}

Let us consider the projective line $\mathbb{P}^1$ and $z$ be its local coordinate . $I=\{x_1,\cdots, x_n\}\subset \mathbb{P}^1$ be a finite subset with $n\geq 3$ and $k$ be a positive integer. Consider a vector bundle $E$ of rank $r$ on $\mathbb{P}^1$, a parabolic structure on $E$ is given by the following choices:

\begin{enumerate}

\item[(1)] Choice of flag at each $x\in I$: $$E|_x=F^0(E_x)\supseteq F^1(E_x)\supseteq \cdots \supseteq F^{\sigma_x}(E_x)=0$$ We put $n_i(x)=\text{dim} F^{i-1}(E_x)-\text{dim} F^i(E_x)$,  $1\leq i \leq \sigma_x$;

\item[(2)] Choice of a sequence of integers at each $x\in D$: $$0\leq a_1(x)< a_2(x)< \cdots < a_{\sigma_x}(x)< k$$ We call these numbers weights.

\end{enumerate}
With a parabolic structure given, we say that $E$ is a parabolic vector bundle of type $\Sigma:=\{I,k,\{n_i(x)\},\{a_i(x)\}\}$. If the parabolic type $\Sigma$ is known, we simply say that $E$ is a parabolic vector bundle.

The parabolic degree of $E$ is defined by $$\text{pardeg}E=\text{deg}E+\frac{1}{k}\sum_{x\in D}\sum_{i=1}^{\sigma_x}a_i(x)n_i(x)$$ and $E$ is said to be semistable if for any nontrivial proper subbundle $F$ of $E$, consider the induced parabolic structure on $F$, one has $$\mu_{par}(F):=\frac{\text{pardeg}F}{\text{rk}F}\leq \mu_{par}(E):=\frac{\text{pardeg}E}{\text{rk}E}.$$ $E$ is said to be stable if the inequality is always strict.

In this paper, we consider the parabolic bundle with weights being small enough. To be precise, we always assume the following condition:

\begin{align}\label{Condition}
\frac{1}{k}\sum_{x\in I}a_{\sigma_x}(x)< \frac{1}{r}
\end{align}

Under this assumption, we have a clear understanding of the moduli space of rank $r$ degree $0$ semistable parabolic bundles with type $\Sigma$.

For each $x\in I$, we consider partial flag variety $\text{Flag}(\mathbb{C}^{\oplus r}, \overrightarrow{\gamma}(x))$ of flags in $\mathbb{C}^{\oplus r}$ with dimension vector $$\overrightarrow{\gamma}(x)=\big(\gamma_1(x), \gamma_2(x), \cdots , \gamma_{\sigma_x-1}(x)\big)$$ where $\gamma_i(x)=\sum_{j=i+1}^{\sigma_x}n_j(x)$. Let $\mathbf{F}:=\prod_{x\in I} \text{Flag}(\mathbb{C}^{\oplus r}, \overrightarrow{\gamma}(x))$ be the product of flag varieties and we consider the following polarization on $\mathbf{F}$: $$\prod_{x\in I}\big(d_{1}(x),\cdots, d_{\sigma_x-1}(x)\big)$$ with $d_i(x)=a_{i+1}(x)-a_i(x)$. Clearly it is a linearization of the natural action of the algebraic group $\operatorname{SL}_r$ on $\mathbf{F}$. In this case, we have:

\begin{thm}[\cite{Wen21}, Proposition 2.5]\label{moduli construction}

Under condition (\ref{Condition}), the moduli space $\mathbf{M}_P$ of semi-stable parabolic vector bundle with rank $r$, degree $0$ and type $\Sigma$ on $\mathbb{P}^1$ is isomorphic to $\mathbf{F}//\operatorname{SL}_r$, with polarization given above.

\end{thm}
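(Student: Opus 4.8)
The plan is to identify $\mathbf{F}//\operatorname{SL}_r$ with the moduli space $\mathbf{M}_P$ by showing that the GIT-(semi)stable points of $\mathbf{F}$ with respect to the stated polarization are exactly the points corresponding to (semi)stable parabolic bundles, and that this correspondence is functorial. First I would observe that a point of $\mathbf{F}=\prod_{x\in I}\operatorname{Flag}(\mathbb{C}^{\oplus r},\overrightarrow{\gamma}(x))$ is precisely the data of a collection of flags in a fixed $r$-dimensional space $\mathbb{C}^{\oplus r}$, one at each marked point; pairing this with the trivial bundle $\mathcal{O}_{\mathbb{P}^1}^{\oplus r}$ of degree $0$ and with the fixed weights $\{a_i(x)\}$ gives a quasi-parabolic, hence parabolic, bundle of type $\Sigma$. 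Conversely, any rank $r$ degree $0$ semistable parabolic bundle $E$ has underlying bundle $\bigoplus \mathcal{O}_{\mathbb{P}^1}(d_i)$; condition (\ref{Condition}) forces $E\cong\mathcal{O}_{\mathbb{P}^1}^{\oplus r}$, because a destabilizing sub-line-bundle $\mathcal{O}_{\mathbb{P}^1}(d)$ with $d>0$ would have parabolic slope at least $d\geq 1$ while $\mu_{par}(E)=\frac{1}{rk}\sum_{x}\sum_i a_i(x)n_i(x)<\frac{1}{r}$, a contradiction. So every such $E$ is a point of $\mathbf{F}$, and two such are isomorphic as parabolic bundles iff they differ by the $\operatorname{GL}_r = \operatorname{Aut}(\mathcal{O}^{\oplus r})$-action, which restricts to $\operatorname{SL}_r$ after quotienting by the scalars acting trivially on $\mathbf{F}$.

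The heart of the argument is the numerical comparison of the two stability conditions. For a one-parameter subgroup of $\operatorname{SL}_r$, equivalently a weighted filtration $0\subsetneq W_1\subsetneq\cdots\subsetneq W_\ell=\mathbb{C}^{\oplus r}$ of the fiber, the Hilbert–Mumford weight with respect to the polarization $\prod_x(d_1(x),\dots,d_{\sigma_x-1}(x))$ is, after the substitution $d_i(x)=a_{i+1}(x)-a_i(x)$ and an Abel summation, a positive multiple of the difference $\mu_{par}(F)-\mu_{par}(E)\cdot\tfrac{\operatorname{rk}F}{\operatorname{rk}E}$ for the subbundle $F\subset\mathcal{O}^{\oplus r}$ generated by the corresponding subspace; here one uses that on $\mathbb{P}^1$ with the trivial bundle, a subspace $W\subset\mathbb{C}^{\oplus r}$ generates the subbundle $W\otimes\mathcal{O}_{\mathbb{P}^1}$, whose degree is $0$, so $\operatorname{pardeg}$ of $F$ reduces to the weight contribution of the induced flags. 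Matching the Flag-variety $\mathcal{O}(d_i(x))$-weights of a subspace against the induced parabolic weights $a_i(x)$ is the computation to carry out carefully, and it is the step I expect to be the main obstacle: one must check that the choice $d_i(x)=a_{i+1}(x)-a_i(x)$ is exactly the one making Hilbert–Mumford (semi)stability on $\mathbf{F}$ coincide with parabolic (semi)stability, including the boundary (strict vs.\ non-strict) cases, and that every subbundle of $\mathcal{O}^{\oplus r}$ relevant to parabolic stability arises from a subspace of the generic fiber (which again follows from degree-$0$ triviality and the semistability already forced by (\ref{Condition})).

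Given the numerical equivalence of stability notions, I would then invoke Mumford's GIT together with the construction of $\mathbf{M}_P$ as a GIT quotient in the Mehta–Seshadri / Bhosle framework: both $\mathbf{F}//\operatorname{SL}_r$ and $\mathbf{M}_P$ represent (resp.\ corepresent) the same moduli functor of semistable parabolic bundles of type $\Sigma$, the map being $\left[\text{flags}\right]\mapsto\left[(\mathcal{O}_{\mathbb{P}^1}^{\oplus r},\text{flags},\text{weights})\right]$, which is an $\operatorname{SL}_r$-invariant morphism on the semistable locus and hence descends. Its inverse is supplied by the rigidification above: a family of semistable parabolic bundles of type $\Sigma$ over a base $S$ has, by cohomology and base change plus condition (\ref{Condition}), underlying bundle fppf-locally trivial, yielding a classifying map to $\mathbf{F}$ well-defined up to $\operatorname{SL}_r$, i.e.\ a map $S\to\mathbf{F}//\operatorname{SL}_r$. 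These two constructions are mutually inverse, so the quotient $\mathbf{F}//\operatorname{SL}_r$ with the stated polarization is isomorphic to $\mathbf{M}_P$, as claimed; one finally remarks that S-equivalence of strictly semistable parabolic bundles corresponds to the identification of closed orbits in the semistable locus, so the isomorphism holds at the level of the coarse moduli spaces and not merely on the stable loci.
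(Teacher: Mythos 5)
Your proposal follows essentially the same route the paper indicates: the theorem is imported from \cite{Wen21} (Proposition 2.5), and the remark following it in the paper gives exactly your key observation --- condition (\ref{Condition}) forces every rank $r$, degree $0$ semistable parabolic bundle to have trivial underlying bundle, so the moduli problem reduces to the GIT quotient of the product of partial flag varieties by $\operatorname{SL}_r$. The one step you explicitly leave unexecuted, the Hilbert--Mumford weight computation matching the polarization $d_i(x)=a_{i+1}(x)-a_i(x)$ against parabolic (semi)stability, is precisely the content supplied by the cited reference, so your outline is consistent with (and somewhat more detailed than) the paper's own treatment.
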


\begin{rem}

 Roughly speaking, since the weights are small enough(condition (\ref{Condition})),  all semistable parabolic bundles are semistable as vector bundles. But the only rank $r$ degree $0$ semistable bundle on $\mathbb{P}^1$ is $\mathcal{O}_{\mathbb{P}^1}^{\oplus r}$, thus the moduli space $\mathbf{M}_P$ parametrizes all parabolic structures on $\mathcal{O}_{\mathbb{P}^1}^{\oplus r}$, which is indeed the GIT quotient of product of partial flag varieties.

%

\end{rem}

There is a universal bundle $\mathcal{E}=\mathcal{O}^{\oplus r}$ on $\mathbf{F}\times \mathbb{P}^1$ and for each $x\in I$, and universal quotients: $$\mathcal{E}|_{x}=Q_{\sigma_x}(\mathcal{E}|_{x})\twoheadrightarrow Q_{\sigma_x-1}(\mathcal{E}|_{x})\twoheadrightarrow \cdots \twoheadrightarrow Q_{1}(\mathcal{E}|_{x}) \twoheadrightarrow Q_{0}(\mathcal{E}|_{x})=0$$ There is a theta line bundle on $\mathbf{F}$ defined as follows: $$\Theta_{\mathbf{F}}:= (\text{det}R\pi_{\mathbf{F}}\mathcal{E})^{-k}\otimes \bigotimes_{x\in I}\Big(\bigotimes_{i=1}^{\sigma_x-1}\big(\text{det}Q_{i}(\mathcal{E}|_{x})\big)^{d_i(x)}\Big)\otimes \big(\text{det}(\mathcal{E}|_{\mathbf{F}\times q})\big)^l$$ where $\pi_{\mathbf{F}}: \mathbf{F}\times \mathbb{P}^1\rightarrow \mathbf{F}$ is the projection, $q\in \mathbb{P}^1\setminus I$ is a closed point and $$l=\dfrac{kr-\sum_{x\in I}\sum_{i=1}^{\sigma_x-1}d_i(x)(r-\gamma_{\sigma_x-i}(x))}{r}$$ is assumed to be an integer.

\begin{rem}

Notice that in the definition of $\Theta_{\mathbf{F}}$, $(\text{det}R\pi_{\mathbf{F}}\mathcal{E})^{-k}$ and $\big(\text{det}(\mathcal{E}|_{\mathbf{F}\times q})\big)^l$ are actually trivial bundles, their appearance and the choice of $l$ make sure that the weight of the action of $\operatorname{GL}_r$ on the fiber of $\Theta_{\mathbf{F}}$ is $0$.

\end{rem}

\begin{prop}\label{quantizition}

The restriction of $\Theta_{\mathbf{F}}$ to the semistable locus $\mathbf{F}^{ss}$ descends to an ample line bundle $\Theta$ on the moduli space $\mathbf{M}_P$. Moreover, we have $$\emph{H}^0(\mathbf{M}, \Theta)\cong\emph{H}^0(\mathbf{F}, \Theta_{\mathbf{F}})^{\operatorname{SL}_r}$$

\end{prop}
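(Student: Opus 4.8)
The plan is to deduce the statement from Kempf's descent lemma, using the description of $\mathbf{M}_P$ as the GIT quotient $\mathbf{F}//\operatorname{SL}_r$ in Theorem \ref{moduli construction}. First I would identify $\Theta_{\mathbf{F}}$, as an $\operatorname{SL}_r$-linearized line bundle, with the polarization used to form that quotient. Since $\mathcal{E}=\mathcal{O}_{\mathbb{P}^1}^{\oplus r}$, both $R\pi_{\mathbf{F}}\mathcal{E}$ and $\mathcal{E}|_{\mathbf{F}\times q}$ are the trivial rank $r$ bundle carrying the standard $\operatorname{GL}_r$-representation, so their determinants carry the determinant character of $\operatorname{GL}_r$, which restricts to the trivial character of $\operatorname{SL}_r$. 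Hence the two ``correction'' factors $(\det R\pi_{\mathbf{F}}\mathcal{E})^{-k}$ and $(\det\mathcal{E}|_{\mathbf{F}\times q})^{l}$ are $\operatorname{SL}_r$-equivariantly trivial, and as $\operatorname{SL}_r$-linearized bundles $$\Theta_{\mathbf{F}}\;\cong\;\bigotimes_{x\in I}\bigotimes_{i=1}^{\sigma_x-1}\big(\det Q_i(\mathcal{E}|_x)\big)^{d_i(x)},$$ which is precisely the ample linearization attached to the polarization $\prod_{x}\big(d_1(x),\dots,d_{\sigma_x-1}(x)\big)$ on $\mathbf{F}$. In particular $\Theta_{\mathbf{F}}$ is ample on $\mathbf{F}$ and its semistable locus is the locus $\mathbf{F}^{ss}$ of Theorem \ref{moduli construction}.

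Next I would invoke Kempf's descent lemma: an $\operatorname{SL}_r$-linearized line bundle on $\mathbf{F}^{ss}$ descends to $\mathbf{F}//\operatorname{SL}_r$ precisely when, for every point $f\in\mathbf{F}^{ss}$ with closed $\operatorname{SL}_r$-orbit in $\mathbf{F}^{ss}$, the stabilizer $\operatorname{Stab}_{\operatorname{SL}_r}(f)$ acts trivially on the fiber of $\Theta_{\mathbf{F}}$ at $f$. By the Remark following the definition of $\Theta_{\mathbf{F}}$, the exponent $l$ was chosen so that the entire group $\operatorname{GL}_r$ acts with weight $0$ on the fibers of $\Theta_{\mathbf{F}}$; since every stabilizer is a subgroup of $\operatorname{SL}_r\subset\operatorname{GL}_r$, the hypothesis of the lemma holds, and $\Theta_{\mathbf{F}}|_{\mathbf{F}^{ss}}$ descends to a line bundle $\Theta$ on $\mathbf{M}_P$ with $\pi^{*}\Theta\cong\Theta_{\mathbf{F}}|_{\mathbf{F}^{ss}}$, where $\pi\colon\mathbf{F}^{ss}\to\mathbf{M}_P$ is the good quotient map. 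Ampleness of $\Theta$ is then immediate from the general theory of GIT: $\mathbf{M}_P=\operatorname{Proj}\bigoplus_{m\ge0}H^0(\mathbf{F},\Theta_{\mathbf{F}}^{\otimes m})^{\operatorname{SL}_r}$, and $\Theta$ is the descent of the very linearization appearing in this graded ring, so a positive power of $\Theta$ is the Serre twisting sheaf of this $\operatorname{Proj}$, hence $\Theta$ is ample.

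For the statement on sections, since $\pi$ is a good quotient and $\pi^{*}\Theta\cong\Theta_{\mathbf{F}}|_{\mathbf{F}^{ss}}$, the projection formula gives $\big(\pi_{*}(\Theta_{\mathbf{F}}|_{\mathbf{F}^{ss}})\big)^{\operatorname{SL}_r}=\Theta$, hence $H^0(\mathbf{M}_P,\Theta)\cong H^0(\mathbf{F}^{ss},\Theta_{\mathbf{F}})^{\operatorname{SL}_r}$. It remains to check that restriction $H^0(\mathbf{F},\Theta_{\mathbf{F}})\to H^0(\mathbf{F}^{ss},\Theta_{\mathbf{F}})$ is an isomorphism; this follows because $\mathbf{F}$ is smooth (a product of partial flag varieties) and the unstable locus $\mathbf{F}\setminus\mathbf{F}^{ss}$ has codimension at least $2$, so Hartogs' extension applies and the restriction is bijective, hence so is its restriction to $\operatorname{SL}_r$-invariants. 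Here condition \eqref{Condition}, which keeps all weights small, is what guarantees the semistable locus is this large. I expect this codimension bound to be the only genuinely delicate point; should it fail in a degenerate configuration one can instead replace $\Theta_{\mathbf{F}}$ by a sufficiently divisible power and argue with the graded ring directly, at the cost of tracking the GIT saturation exponent. Everything else is a bookkeeping of linearizations together with direct appeals to Theorem \ref{moduli construction}, Kempf's lemma, and the standard properties of good quotients.
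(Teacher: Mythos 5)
Your architecture (Kempf descent, then comparison of sections on $\mathbf{F}^{ss}$ with sections on $\mathbf{F}$) parallels the paper's, but the step you yourself flag as delicate is a genuine gap, not a technicality. The claim that the unstable locus $\mathbf{F}\setminus\mathbf{F}^{ss}$ has codimension at least $2$ is false for these polarizations. Take $r=2$ and three full flags, so $\mathbf{F}=\mathbb{P}^1\times\mathbb{P}^1\times\mathbb{P}^1$ with weights $a_1(x)=0<a_2(x)=a$ satisfying condition (\ref{Condition}): a triple of lines $(\ell_1,\ell_2,\ell_3)$ is unstable exactly when two of them coincide, so the unstable locus is a union of three divisors. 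Condition (\ref{Condition}) makes the weights small, but it does not make the unstable locus small, and the paper's final remark even emphasizes that good codimension estimates are unavailable in this genus-zero setting. Consequently the restriction $H^0(\mathbf{F},\Theta_{\mathbf{F}})\to H^0(\mathbf{F}^{ss},\Theta_{\mathbf{F}})$ is in general not an isomorphism of full section spaces and Hartogs cannot be invoked. What is true --- and what the paper actually uses --- is the weaker statement that restriction is an isomorphism on $\operatorname{SL}_r$-invariants; this is Teleman's quantization theorem (the ``strong version of the quantization conjecture'', \cite{Tel00}, Proposition 2.11), which needs no codimension hypothesis. That citation is the missing ingredient, and your fallback of passing to a power of $\Theta_{\mathbf{F}}$ does not recover the specific graded piece $H^0(\mathbf{F},\Theta_{\mathbf{F}})^{\operatorname{SL}_r}$ that the Verlinde formula computes.

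Two smaller points. First, your verification of Kempf's criterion is incomplete: the choice of $l$ makes the central $\mathbb{G}_m\subset\operatorname{GL}_r$ (which acts trivially on $\mathbf{F}$) act with weight $0$ on the fibers, but Kempf's lemma requires the full stabilizer of every closed semistable orbit to act trivially on the corresponding fiber, and strictly semistable points have stabilizers strictly larger than the center; saying ``every stabilizer is a subgroup of $\operatorname{SL}_r$'' does not address this. The paper sidesteps the issue by citing the descent and ampleness statements from the parabolic-moduli literature (\cite{NR93}, \cite{P96}). Second, the identification $H^0(\mathbf{M}_P,\Theta)\cong H^0(\mathbf{F}^{ss},\Theta_{\mathbf{F}})^{\operatorname{SL}_r}$ via the good quotient is fine and agrees with the paper; it is only the extension across the unstable locus that needs Teleman rather than Hartogs.
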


\begin{proof}

For the part why the restriction of $\Theta_{\mathbf{F}}$ descends to an ample line bundle on $\mathbf{M}_P$, we refer to \cite{NR93}(for rank two case) and \cite{P96}(for arbitrary rank case). On the other hand, by definition we have $\text{H}^0(\mathbf{M}, \Theta)\cong\text{H}^0(\mathbf{F}^{ss}, \Theta_{\mathbf{F}})^{\operatorname{SL}_r}$, and by the strong version of quantization conjecture(proved by Teleman in \cite{Tel00}, Proposition 2.11), we have $\text{H}^0(\mathbf{F}^{ss}, \Theta_{\mathbf{F}})^{\operatorname{SL}_r}\cong \text{H}^0(\mathbf{F}, \Theta_{\mathbf{F}})^{\operatorname{SL}_r}$.
\end{proof}

In order to calculate the dimension of $\text{H}^0(\mathbf{M}_P, \Theta)$, we need to define some other notations.

For a given partition $\underline{\lambda}=(\lambda_1\geq \lambda_2\geq \cdots \geq \lambda_r)\in \mathcal{P}_r$, one can define the Schur polynomial to be $$S_{\underline{\lambda}}(z_1,\cdots , z_r)=\dfrac{\text{det}(z_j^{\lambda_i+r-i})}{\text{det}(z_j^{r-i})}$$ For a given parabolic type $\Sigma:=\{I,k,\{n_i(x)\},\{a_i(x)\}\}$, we define the partitions for each $x\in I$ $$\underline{\lambda_x}=(\overbrace{k-a_1(x)= \cdots= k-a_1(x)}^{n_1(x)}> \cdots > \overbrace{k-a_{\sigma_x(x)}=\cdots = k-a_{\sigma_x}(x)}^{n_{\sigma_x}(x)})$$ and $$S_{\Sigma}(z_1, \cdots , z_r)=\prod_{x\in I}S_{\underline{\lambda_x}} \ \ \ \ \ |\Sigma|=\sum_{x\in I}|\lambda_x|$$

\begin{prop}[\cite{SZh20}, Theorem 4.3]\label{Verlinde formula}

With notations above, the dimension of $\emph{H}^0(\mathbf{M}_P, \Theta)$ is given by  $$V(\Sigma):=\dfrac{1}{r(r+k)^{r-1}}\sum_{\overrightarrow{v}}\emph{exp}\big(2\pi i(-\dfrac{|\Sigma|}{r(r+k)})\sum_{i=1}^rv_i\big)\big(\prod_{i<j}(2\emph{sin}\pi\dfrac{v_i-v_j}{r+k})^{-2}\big)S_{\Sigma}(\emph{exp}2\pi i\dfrac{\overrightarrow{v}}{r+k})$$ where the summation index $\overrightarrow{v}=(v_1, \cdots, v_r)$ runs through the integers $$0=v_r< \cdots < v_2< v_1 < r+k.$$

\end{prop}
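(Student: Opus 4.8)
The plan is to identify $\text{H}^0(\mathbf{M}_P,\Theta)$ with a space of conformal blocks for the affine Kac--Moody algebra $\widehat{\mathfrak{sl}}_r$ at level $k$ on the $n$-pointed projective line, and then to evaluate the dimension of that space by the Verlinde formula in its abstract $S$-matrix form, rewriting the resulting sum into the stated closed expression. Throughout, Proposition \ref{quantizition} lets me work with $\text{H}^0(\mathbf{F},\Theta_{\mathbf{F}})^{\operatorname{SL}_r}$, and condition (\ref{Condition}) is what guarantees that the combinatorial data below define honest level-$k$ dominant weights.

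First I would attach to each marked point $x\in I$ a dominant integral weight of $\mathfrak{sl}_r$ of level $\le k$, read off from the partition $\underline{\lambda_x}$ with parts $k-a_i(x)$ and multiplicities $n_i(x)$: the successive differences of its parts are the Dynkin labels, and the hypothesis $\frac1k\sum_x a_{\sigma_x}(x)<\frac1r$ forces each such weight into the level-$k$ alcove. Then, by the uniformization theorem together with the parabolic ``generalized theta functions $=$ conformal blocks'' comparison (Pauly, Laszlo--Sorger, and Beauville--Laszlo in the non-parabolic case), $\text{H}^0(\mathbf{M}_P,\Theta)$ is canonically isomorphic to the space of conformal blocks $\mathcal{V}_{\widehat{\mathfrak{sl}}_r,k}(\mathbb{P}^1;x_1,\dots,x_n;\underline{\lambda_{x_1}},\dots,\underline{\lambda_{x_n}})$. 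The delicate point here is matching the three ingredients of $\Theta_{\mathbf{F}}$ --- the determinant-of-cohomology factor $(\det R\pi_{\mathbf{F}}\mathcal{E})^{-k}$, the flag determinants $\prod_i(\det Q_i(\mathcal{E}|_x))^{d_i(x)}$, and the twist $(\det\mathcal{E}|_{\mathbf{F}\times q})^{l}$ --- with, respectively, the level $k$, the choice of representations at the points, and the $\mathfrak{gl}_r$-versus-$\mathfrak{sl}_r$ normalization that fixes the central $U(1)$-degree.

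With the identification in hand, I would invoke the abstract genus-$0$ Verlinde formula $\dim\mathcal{V}=\sum_{\Lambda}(S_{0\Lambda})^{2-n}\prod_{x\in I}S_{\underline{\lambda_x}\,\Lambda}$, the sum running over level-$k$ weights $\Lambda$, where $S$ is the modular $S$-matrix of $\widehat{\mathfrak{sl}}_r$ at level $k$. Substituting the Kac--Peterson formula $S_{\lambda\Lambda}=\frac{(\text{const})}{\sqrt{r}\,(r+k)^{(r-1)/2}}\sum_{w\in W}\epsilon(w)\exp\!\big(\tfrac{-2\pi i}{r+k}\langle w(\lambda+\rho),\Lambda+\rho\rangle\big)$ turns each $S_{\lambda\Lambda}$ into an antisymmetrized exponential sum, i.e. a determinant $\det(z_j^{\lambda_i+r-i})$ with $z_j=\exp(2\pi i\,v_j/(r+k))$, where $\overrightarrow{v}=\Lambda+\rho$ runs exactly over the integers $0=v_r<\cdots<v_1<r+k$. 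Dividing numerators by the Weyl-denominator determinant $S_{0\Lambda}$ reconstitutes the Schur polynomials and produces the product $S_\Sigma(\exp 2\pi i\,\overrightarrow{v}/(r+k))$, while the leftover powers of $S_{0\Lambda}$ collapse to the Vandermonde product $\prod_{i<j}(z_i-z_j)$, whose modulus and phase yield both the sine factors $\prod_{i<j}(2\sin\pi\frac{v_i-v_j}{r+k})^{\pm2}$ and an exponential phase of the form $\exp(2\pi i\,(r-1)\sum_i v_i/(r+k))$; the normalization constants $\sqrt{r}$ and $(r+k)^{(r-1)/2}$ collect into the overall prefactor $\frac{1}{r(r+k)^{r-1}}$.

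The step I expect to be the main obstacle is the exact bookkeeping of normalizations and phases rather than any conceptual difficulty. One must reconcile: (i) the overall constant, tracking the centre $|Z(\operatorname{SL}_r)|=r$ and the factor $(r+k)^{r-1}$ arising from $|S_{00}|^{-2}$; (ii) the precise power of the sine factor, which depends on how the $n$ Weyl denominators of the $n$ Schur polynomials combine against the $(S_{0\Lambda})^{2-n}$ prefactor; and, most subtly, (iii) the anomaly phase $\exp(-2\pi i\frac{|\Sigma|}{r(r+k)}\sum_i v_i)$, which encodes simultaneously the $U(1)$-degree fixed by the choice of the integer $l$ in $\Theta_{\mathbf{F}}$, the passage between $\mathfrak{gl}_r$ Schur polynomials and $\mathfrak{sl}_r$ characters, and the Vandermonde phase computed above. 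Matching $|\Sigma|$ against the sum of partition sizes is precisely where the definition of $l$ and condition (\ref{Condition}) must be used. A self-contained alternative, bypassing conformal blocks, would be to compute $\dim\text{H}^0(\mathbf{F},\Theta_{\mathbf{F}})^{\operatorname{SL}_r}$ directly from the GIT description of Theorem \ref{moduli construction} by an equivariant (Atiyah--Bott / Jeffrey--Kirwan residue) localization on $\mathbf{F}$, in which the same sine and phase factors emerge as fixed-point contributions; this route trades the uniformization input for a harder residue evaluation.
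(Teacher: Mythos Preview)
The paper does not prove this proposition: it is quoted as Theorem~4.3 of \cite{SZh20} and used as a black box in the proof of Theorem~\ref{main theorem}. There is therefore no argument in the paper to compare yours against.

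On the substance of your proposal, there is a real gap in the main route. You invoke the Pauly / Laszlo--Sorger identification of parabolic generalized theta functions with conformal blocks, but that result is for sections of the theta line bundle on the moduli \emph{stack} of parabolic bundles, not on the coarse moduli space $\mathbf{M}_P$. Passing from stack to space requires a codimension estimate on the non-stable locus, and the paper itself cautions in its closing Remark that on $\mathbb{P}^1$ with three punctures this estimate fails, so stack-level and space-level sections genuinely differ; your identification of $\text{H}^0(\mathbf{M}_P,\Theta)$ with conformal blocks is therefore not justified as written. A repair exists, but it is not the one you give: Proposition~\ref{quantizition} together with Borel--Weil already yield $\text{H}^0(\mathbf{M}_P,\Theta)\cong\big(\bigotimes_{x\in I} V(\underline{\lambda_x})\big)^{\operatorname{SL}_r}$, and what is then needed is the equality of this classical invariant space with the conformal block, i.e.\ the ``above critical level'' phenomenon referenced via \cite{BGM15} in the Introduction. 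That is where condition~(\ref{Condition}) genuinely enters. By contrast, your stated use of~(\ref{Condition}), that it ``forces each such weight into the level-$k$ alcove'', is incorrect: the alcove condition on $\underline{\lambda_x}$ reduces to $a_{\sigma_x}(x)<k$, which is already part of the definition of a parabolic structure and has nothing to do with~(\ref{Condition}).
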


For a given partition $$\underline{\lambda}=(\overbrace{\lambda_1= \cdots= \lambda_1}^{n_1(\underline{\lambda})}> \overbrace{\lambda_2= \cdots =\lambda_2}^{n_2(\underline{\lambda})}> \cdots > \overbrace{\lambda_{\sigma_{\lambda}}= \cdots= \lambda_{\sigma_{\lambda}}}^{n_{\sigma_{\lambda}}(\underline{\lambda})})$$ we can use $\{\{n_i(\underline{\lambda})\}, \{\lambda_i\}\}$ to denote $\underline{\lambda}$. In this way, if we are given $n$ partitions $\underline{\lambda}^1, \cdots, \underline{\lambda}^n$, for any choice of points $I=\{x_1, \cdots, x_n\}\subset \mathbb{P}^1$, one can get a parabolic type $$\Sigma(\underline{\lambda}^1, \cdots, \underline{\lambda}^n)=\{I,k, \{n_i(\underline{\lambda}^j)\}, \{\lambda_1^j-\lambda_i^j\}\}$$ where $k$ is an integer so that $k> \lambda_1^j-\lambda_i^j$ for all $i$ and $j$. Moreover, we define a new partition by $${}^{k}\underline{\lambda}=(\overbrace{k-\lambda_1+\lambda_1= \cdots= k-\lambda_1+\lambda_1}^{n_1(\underline{\lambda})}> \cdots> \overbrace{k-\lambda_1+\lambda_{\sigma_{\lambda}}= \cdots=k-\lambda_1+ \lambda_{\sigma_{\lambda}}}^{n_{\sigma_{\lambda}}(\underline{\lambda})})$$

On the other hand, for the partition $\underline{\lambda}$, we can construct a partial flag variety $\mathbf{F}(\underline{\lambda})=\text{Flag}(\mathbb{C}^r, \overrightarrow{\gamma}(\underline{\lambda}))$ with dimension vector $$\overrightarrow{\gamma}(\underline{\lambda})=\big(\gamma_1(\underline{\lambda}), \gamma_2(\underline{\lambda}), \cdots , \gamma_{\sigma_{\lambda}}(\underline{\lambda})\big)$$ where $\gamma_i(\underline{\lambda})=\sum_{j=i+1}^{\sigma_{\lambda}}n_j(\underline{\lambda})$. Over $\mathbf{F}(\underline{\lambda})$ we have universal subbundles $$\mathcal{O}^{\oplus r}=\mathcal{V}_{0} \supset \cdots \supset \mathcal{V}_{\sigma_{\lambda}-1} \supset \mathcal{V}_{\sigma_{\lambda}} =0$$ as well as universal quotient bundles $$\mathcal{O}^{\oplus r}=Q_{\sigma_{\lambda}}\twoheadrightarrow \cdots \twoheadrightarrow Q_{1}\twoheadrightarrow Q_0=0$$ with $Q_i=\mathcal{O}^{\oplus r}/\mathcal{V}_i$, $0\leq i \leq \sigma_{\lambda}$, and a line bundle 
\begin{align*}
\mathcal{L}(\underline{\lambda})&=\big(\text{det}(\mathcal{V}_{\sigma_{\lambda}-1}/\mathcal{V}_{\sigma_{\lambda}})\big)^{\otimes\lambda_{\sigma_{\lambda}}}\otimes \cdots \otimes \big(\text{det}(\mathcal{V}_{1}/\mathcal{V}_{2})\big)^{\otimes\lambda_{2}} \otimes \big(\text{det}(\mathcal{V}_{0}/\mathcal{V}_{1})\big)^{\otimes\lambda_{1}}\\
&\cong (\text{det}\mathcal{V}_{\sigma_{\lambda}})^{\otimes -\lambda_{\sigma_{\lambda}}}\otimes \cdots \otimes (\text{det}\mathcal{V}_{1})^{\otimes \lambda_{2}-\lambda_1}\otimes (\text{det}\mathcal{V}_{0})^{\otimes \lambda_{1}}\\
&\cong \big(\text{det}(\mathcal{V}_{0}/\mathcal{V}_{\sigma_{\lambda}})\big)^{\otimes\lambda_{\sigma_{\lambda}}}\otimes  \cdots \otimes \big(\text{det}(\mathcal{V}_{0}/\mathcal{V}_{2})\big)^{\otimes\lambda_{2}-\lambda_3} \otimes \big(\text{det}(\mathcal{V}_{0}/\mathcal{V}_{1})\big)^{\otimes\lambda_{1}-\lambda_2}\\
&\cong (\text{det}Q_{\sigma_{\lambda}})^{\lambda_{\sigma_{\lambda}}}\otimes (\text{det}Q_{\sigma_{\lambda}-1})^{\lambda_{\sigma_{\lambda}-1}-\lambda_{\sigma_{\lambda}}}\otimes \cdots \otimes (\text{det}Q_2)^{\lambda_2-\lambda_3}\otimes (\text{det}Q_1)^{\lambda_1-\lambda_2}
\end{align*}
By Borel-Weil's theorem, we have $V(\underline{\lambda})\cong \text{H}^{0}(\mathbf{F}(\underline{\lambda}), \mathcal{L}(\underline{\lambda}))$ as $\operatorname{GL}_r$ representations.


\begin{thm}\label{main theorem}

Given $n+1$ partitions $\underline{\lambda}^1, \cdots, \underline{\lambda}^n, \underline{\nu} \in \mathcal{P}_r$ such that $$|\underline{\lambda}^1|+\cdots+|\underline{\lambda}^n|=|\underline{\nu}|$$ we choose an integer $k$ so that the parabolic type $\Sigma(\underline{\lambda}^1, \cdots, \underline{\lambda}^n, \underline{\nu}^*)$ satisfies condition (\ref{Condition}), then the multiplicity of representation $V(\underline{\nu})$ in the tensor product $V(\underline{\lambda}^1)\otimes \cdots \otimes V(\underline{\lambda}^n)$ is given by the formula $V(\Sigma(\underline{\lambda}^1, \cdots, \underline{\lambda}^n, \underline{\nu}^*))$ as in Theorem \ref{Verlinde formula}. In particular, if we are given three partitions $\underline{\lambda}$, $\underline{\mu}$ and $\underline{\nu}$ is in the Introduction so that $|\underline{\lambda}|+|\underline{\mu}|=|\underline{\nu}|$, then the \emph{\textbf{LR}} coefficient is given by $$c_{\underline{\lambda}\underline{\mu}}^{\underline{\nu}}=\dfrac{1}{r(r+k)^{r-1}}\sum_{\overrightarrow{v}}\emph{exp}\big(2\pi i(-\dfrac{|\Sigma|}{r(r+k)})\sum_{i=1}^rv_i\big)\big(\prod_{i<j}(2\emph{sin}\pi\dfrac{v_i-v_j}{r+k})^{-2}\big)S_{\Sigma}(\emph{exp}2\pi i\dfrac{\overrightarrow{v}}{r+k})$$ where $|\Sigma|=|{}^k\underline{\lambda}|+|{}^k\underline{\mu}|+|{}^k\underline{\nu}^*|$, $S_{\Sigma}=S_{{}^k\underline{\lambda}}S_{{}^k\underline{\mu}}S_{{}^k\underline{\nu}^*}$ is the product of Schur polynomials and the summation index $\overrightarrow{v}=(v_1, \cdots, v_r)$ runs through the integers $0=v_r< \cdots < v_2< v_1 < r+k$ and $k$ is an integer such that the following inequality holds: $$\dfrac{\lambda_1-\lambda_{\sigma_{\lambda}}+\mu_1-\mu_{\sigma_{\mu}}+\nu_1-\nu_{\sigma_{\nu}}}{k}<\dfrac{1}{r}.$$

\end{thm}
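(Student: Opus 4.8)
The plan is to run the chain of identifications already sketched in the introduction: first pass from the tensor-product multiplicity to a space of $\operatorname{SL}_r$-invariants by the Borel--Weil theorem, then recognize that space as the space of sections of the theta bundle on the moduli space of parabolic bundles on $\mathbb{P}^1$, and finally quote the Verlinde-type formula of Proposition~\ref{Verlinde formula}. Throughout, $k$ is the integer fixed in the hypothesis; one first records that for the parabolic type $\Sigma:=\Sigma(\underline{\lambda}^1,\dots,\underline{\lambda}^n,\underline{\nu}^*)$ the requirement~(\ref{Condition}) is literally the displayed inequality on $k$, so that Theorem~\ref{moduli construction} and Proposition~\ref{quantizition} apply to $\Sigma$.

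The first step is the reduction to invariants. I would write the multiplicity of $V(\underline{\nu})$ in $V(\underline{\lambda}^1)\otimes\cdots\otimes V(\underline{\lambda}^n)$ as $\dim\operatorname{Hom}_{\operatorname{GL}_r}\big(V(\underline{\nu}),\,V(\underline{\lambda}^1)\otimes\cdots\otimes V(\underline{\lambda}^n)\big)=\dim\big(V(\underline{\lambda}^1)\otimes\cdots\otimes V(\underline{\lambda}^n)\otimes V(\underline{\nu}^*)\big)^{\operatorname{GL}_r}$, using $V(\underline{\nu})^*\cong V(\underline{\nu}^*)$; since $\mathbb{G}_m\subset\operatorname{GL}_r$ acts on this tensor product by the weight $|\underline{\lambda}^1|+\cdots+|\underline{\lambda}^n|-|\underline{\nu}|=0$, the $\operatorname{GL}_r$- and $\operatorname{SL}_r$-invariants coincide. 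Borel--Weil gives $\operatorname{GL}_r$-equivariant isomorphisms $V(\underline{\lambda}^j)\cong H^0(\mathbf{F}(\underline{\lambda}^j),\mathcal{L}(\underline{\lambda}^j))$ and $V(\underline{\nu}^*)\cong H^0(\mathbf{F}(\underline{\nu}^*),\mathcal{L}(\underline{\nu}^*))$, and the Künneth formula identifies the tensor product with $H^0(\mathbf{F},\mathcal{L})$ for $\mathbf{F}:=\mathbf{F}(\underline{\lambda}^1)\times\cdots\times\mathbf{F}(\underline{\lambda}^n)\times\mathbf{F}(\underline{\nu}^*)$ and $\mathcal{L}:=\mathcal{L}(\underline{\lambda}^1)\boxtimes\cdots\boxtimes\mathcal{L}(\underline{\lambda}^n)\boxtimes\mathcal{L}(\underline{\nu}^*)$. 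Hence the multiplicity equals $\dim H^0(\mathbf{F},\mathcal{L})^{\operatorname{SL}_r}$.

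The key step is to identify $(\mathbf{F},\mathcal{L})$ with the data of Theorem~\ref{moduli construction} and Proposition~\ref{quantizition} for $\Sigma$, after fixing $n+1\ (\ge 3)$ distinct points of $\mathbb{P}^1$. On the level of varieties this is immediate, since $\overrightarrow{\gamma}(\underline{\lambda}^j)$ (resp.\ $\overrightarrow{\gamma}(\underline{\nu}^*)$) is by definition the dimension vector that $\Sigma$ prescribes at the $j$-th (resp.\ last) point, so $\mathbf{F}$ is exactly the product of partial flag varieties in Theorem~\ref{moduli construction}. On the level of line bundles, the weights of $\Sigma$ at the point carrying a partition $\underline{\rho}$ are $a_i=\rho_1-\rho_i$, hence $d_i=a_{i+1}-a_i=\rho_i-\rho_{i+1}$, which are exactly the exponents of $\det Q_i$ in the last displayed form of $\mathcal{L}(\underline{\rho})$ for $1\le i\le\sigma_\rho-1$; thus $\mathcal{L}(\underline{\rho})$ agrees with the corresponding local factor of the polarization part of $\Theta_{\mathbf{F}}$ up to the single extra factor $(\det Q_{\sigma_\rho})^{\rho_{\sigma_\rho}}=(\det\mathcal{O}^{\oplus r})^{\rho_{\sigma_\rho}}$, while $\Theta_{\mathbf{F}}$ in addition carries $(\det R\pi_{\mathbf{F}}\mathcal{E})^{-k}$ and $(\det(\mathcal{E}|_{\mathbf{F}\times q}))^l$. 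Each of these extra factors is a trivial line bundle on $\mathbf{F}$ on which $\operatorname{SL}_r$ acts trivially (it acts trivially on $\Lambda^r\mathbb{C}^r$), so $\mathcal{L}\cong\Theta_{\mathbf{F}}$ as $\operatorname{SL}_r$-equivariant line bundles; a short count of determinant characters (the $\operatorname{GL}_r$-weight of $\mathcal{L}(\underline{\rho})$ is $|\underline{\rho}|$, so that of $\mathcal{L}$ is $\sum_j|\underline{\lambda}^j|+|\underline{\nu}^*|=0$, matching the normalization built into $\Theta_{\mathbf{F}}$) also confirms that the number $l$ is genuinely an integer for this $\Sigma$. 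I expect this reconciliation of the two bookkeeping conventions, together with the integrality of $l$, to be the only genuinely delicate point; the rest is a formal chain of isomorphisms of $\operatorname{GL}_r$-representations.

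Granting the identification, the conclusion assembles the quoted results: $\mathbf{F}//\operatorname{SL}_r\cong\mathbf{M}_P$ by Theorem~\ref{moduli construction}, $H^0(\mathbf{F},\Theta_{\mathbf{F}})^{\operatorname{SL}_r}\cong H^0(\mathbf{M}_P,\Theta)$ by Proposition~\ref{quantizition}, and $\dim H^0(\mathbf{M}_P,\Theta)=V(\Sigma)$ by Proposition~\ref{Verlinde formula}. Finally I would unwind the notation in $V(\Sigma)$: the partition $\underline{\lambda_x}=(k-a_1,\dots)$ attached by $\Sigma$ to the $j$-th point is ${}^{k}\underline{\lambda}^j$ and the one at the last point is ${}^{k}\underline{\nu}^*$, so $S_\Sigma=\big(\prod_{j=1}^n S_{{}^{k}\underline{\lambda}^j}\big)S_{{}^{k}\underline{\nu}^*}$ and $|\Sigma|=\sum_{j=1}^n|{}^{k}\underline{\lambda}^j|+|{}^{k}\underline{\nu}^*|$; substituting into the formula of Proposition~\ref{Verlinde formula} gives the stated expression, of which the case $n=2$, $(\underline{\lambda}^1,\underline{\lambda}^2)=(\underline{\lambda},\underline{\mu})$, is the \textbf{LR} formula of Theorem~\ref{1.1}.
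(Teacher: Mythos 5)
Your proposal is correct and follows essentially the same route as the paper's own proof: Borel--Weil plus Künneth to realize the invariant space as $H^0(\mathbf{F},\mathcal{L})^{\operatorname{SL}_r}$, identification of $\mathcal{L}$ with $\Theta_{\mathbf{F}}$ (where you supply more detail than the paper on matching the exponents $d_i=\rho_i-\rho_{i+1}$ and on the triviality of the extra determinant factors), then Theorem~\ref{moduli construction}, Proposition~\ref{quantizition} and Proposition~\ref{Verlinde formula} in sequence. The only cosmetic difference is that the paper passes from $\operatorname{GL}_r$- to $\operatorname{SL}_r$-invariants at the level of $H^0(\mathbf{F},\Theta_{\mathbf{F}})$ using the weight-zero normalization of $l$, whereas you make that reduction at the outset on the tensor product; the two are equivalent.
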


\begin{proof}

By notations above, we have a product of partial flag varieties $$\mathbf{F}:=\mathbf{F}(\underline{\lambda}^1, \cdots, \underline{\lambda}^n, \underline{\nu}^*):=\prod_{i=1}^n \mathbf{F}(\underline{\lambda}^i)\times \mathbf{F}(\underline{\nu}^*)$$ and a line bundle over $\mathbf{F}$: $$\mathcal{L}:=\mathcal{L}(\underline{\lambda}^1)\boxtimes \cdots \boxtimes \mathcal{L}(\underline{\lambda}^n)\boxtimes \mathcal{L}(\underline{\nu}^*)$$By Borel-Weil's theorem, we have an isomorphism of $\operatorname{GL}_r$ representation: 
\begin{align*}
\text{H}^0(\mathbf{F}, \mathcal{L})&\cong \bigotimes_{i=1}^n\text{H}^0(\mathbf{F}(\underline{\lambda}^i), \mathcal{L}(\underline{\lambda}^i))\otimes \text{H}^0(\mathbf{F}(\underline{\nu}^*), \mathcal{L}(\underline{\nu}^*))\\
&\cong V(\underline{\lambda}^1)\otimes \cdots \otimes V(\underline{\lambda}^n)\otimes V(\underline{\nu}^*)
\end{align*}
Next we consider the moduli space of semistable parabolic vector bundles with rank $r$, degree $0$ and type $\Sigma(\underline{\lambda}^1, \cdots, \underline{\lambda}^n, \underline{\nu}^*)$ on $\mathbb{P}^1$. By the choice of $k$ and Theorem \ref{moduli construction}, we see that the moduli space is isomorphic to $\mathbf{F}//\operatorname{SL}_r$, with certain choice of polarization.

Now by the choice of parabolic type $\Sigma(\underline{\lambda}^1, \cdots, \underline{\lambda}^n, \underline{\nu}^*)$ we see that the theta line bundle $\Theta_{\mathbf{F}}$ we defined on $\mathbf{F}$ is isomorphic to $\mathcal{L}$. Also one should notice that the weight of the action of $\operatorname{GL}_r$ on $\text{H}^0(\mathbf{F}, \Theta_{\mathbf{F}})$ is $0$ by the choice of integer $l$ and the weight of action of $\operatorname{GL}_r$ on $\text{H}^0(\mathbf{F}, \mathcal{L})$ is also $0$ since $$|\underline{\lambda}^1|+\cdots+|\underline{\lambda}^n|=|\underline{\nu}|$$Thus we have an isomorphism of $\operatorname{GL}_r$ representations: $$\text{H}^0(\mathbf{F}, \Theta_{\mathbf{F}})\cong \text{H}^0(\mathbf{F}, \mathcal{L})$$ and then the multiplicity of representation $V(\underline{\nu})$ in the tensor product $V(\underline{\lambda}^1)\otimes \cdots \otimes V(\underline{\lambda}^n)$ is given by 
\begin{align*}
&\text{dim}\big(V(\underline{\lambda}^1)\otimes \cdots \otimes V(\underline{\lambda}^n)\otimes V(\underline{\mu}^*)\big)^{\operatorname{GL}_r}\\
=&\text{dim}\text{H}^0(\mathbf{F}, \mathcal{L})^{\operatorname{GL}_r}\\
=&\text{dim}\text{H}^0(\mathbf{F}, \Theta_{\mathbf{F}})^{\operatorname{GL}_r}\\
=&\text{dim}\text{H}^0(\mathbf{F}, \Theta_{\mathbf{F}})^{\operatorname{SL}_r}\\
=&\text{dim}\text{H}^0(\mathbf{M}_P, \Theta)\\
=&V(\Sigma(\underline{\lambda}^1, \cdots, \underline{\lambda}^n, \underline{\nu}^*))
\end{align*}
the third equality follows from the fact that weight of the action of $\operatorname{GL}_r$ on $\text{H}^0(\mathbf{F}, \Theta_{\mathbf{F}})$ is $0$; the forth equality follows form Propositon \ref{quantizition} and the last equality follows from Proposition \ref{Verlinde formula}.
\end{proof}

\begin{example}

Some special cases of the Theorem \ref{main theorem} have been computated in Lemma 4.6 of \cite{SZh20}. Consider partition $$\underline{\omega_s}=(\overbrace{1= \cdots =1}^{s}> \overbrace{0= \cdots =0}^{r-s})$$ Then for a partition $\underline{\lambda}=(\lambda_1\geq \lambda_2\geq \cdots \geq \lambda_r)$ with $\lambda_r\geq 0$, we have $$V(\underline{\lambda})\otimes V(\underline{\omega_s})=\bigoplus _{\underline{\mu}\in Y(\underline{\lambda}, \underline{\omega_s})}V(\underline{\mu})$$ where partitions in $Y(\underline{\lambda}, \underline{\omega_s})$ are given by adding $1$ to each $\lambda_i$ in all $r$ choices of $\{\lambda_i\}$. Thus $c_{\underline{\lambda} \underline{\omega_s}}^{\underline{\mu}}$ equals $1$ if $\underline{\mu}\in Y(\underline{\lambda}, \underline{\omega_s})$ and equals $0$ otherwise. This is what Lemma 4.7 of \cite{SZh20} computes.

\end{example}

\begin{rem}
	In \cite{Bea96}, Proposition 4.3, Beauville gives a description of the space of comformal blocks in genus $0$ and three puncture points case. The dimension of this space is also computed by Verlinde formula and this space can be canonically indentified with the spaces of generlised theta functions on the moduli stack of parabolic bundles on $\mathbb{P}^1$ with three puntures points, see \cite{LaSor97}. However, in this case, the space of generlised theta functions on the moduli stack and on the moduli space are not isomophic since we do not have a good codimension estimate, see \cite{MoYo21}. This explains why our descripition is different from those in \cite{Bea96} Proposition 4.3.
\end{rem}

\bibliographystyle{plain}

\bibliography{ref}

\end{document}